\renewcommand{\a }{\alpha }
\renewcommand{\b }{\beta }
\renewcommand{\d}{\delta }
\newcommand{\D }{\Delta }
\newcommand{\e }{\varepsilon }
\newcommand{\g }{\gamma}
\newcommand{\G }{\Gamma }
\renewcommand{\l }{\lambda }
\renewcommand{\L }{\Lambda }
\newcommand{\n }{\nabla }
\newcommand{\s }{\sigma }
\renewcommand{\t }{\tau }
\renewcommand{\O }{\Omega }
\newcommand{\ov}{\overline}
\def\p{\partial}
\newcommand{\wtilde }{\widetilde}
\newcommand{\be}{\begin{equation}}
\newcommand{\ee}{\end{equation}}
\newcommand{\R}{\mathbb{R}}
\renewcommand{\S}{\mathbb{S}}
\newcommand{\N}{\mathbb{N}}
\newcommand{\ba}{\boldsymbol{\a}}
\newtheorem{theorem}{Theorem}[section]
\newtheorem{proposition}[theorem]{Proposition}
\newtheorem{definition}{Definition}[section]
\newtheorem{example}[theorem]{Example}
\newcommand{\bpr}{\begin{proposition}}
\newcommand{\epr}{\end{proposition}}
\newcommand{\bex}{\begin{example}\rm}
\newcommand{\eex}{\end{example}}
\begin{document}

\newtheorem{lem}{Lemma}[section]
\newtheorem{pro}[lem]{Proposition}
\newtheorem{thm}[lem]{Theorem}
\newtheorem{rem}[lem]{Remark}
\newtheorem{cor}[lem]{Corollary}
\newtheorem{df}[lem]{Definition}

\title[Curvature with conical singularities and geodesic boundary]{Prescribing Gaussian curvature on surfaces with conical singularities and \\geodesic boundary}

\author[L. Battaglia]{Luca Battaglia}
\address{Luca Battaglia, Dipartimento di Matematica e Fisica, Universit\`a degli Studi Roma Tre, Largo S.
Leonardo Murialdo, 00146 Roma, Italy}
\email{lbattaglia@mat.uniroma3.it}

\author[A. Jevnikar]{ Aleks Jevnikar}
\address{Aleks Jevnikar, Department of Mathematics, Computer Science and Physics, University of Udine, Via delle Scienze 206, 33100 Udine, Italy}
\email{aleks.jevnikar@uniud.it}

\author[Z.A. Wang]{Zhi-An Wang}
\address{Zhi-An Wang, Department of Applied Mathematics, Hong Kong Polytechnic University,
Hung Hom, Kowloon, Hong Kong}
\email{mawza@polyu.edu.hk}

\author[W. Yang]{ Wen Yang}
\address{ Wen ~Yang,~Wuhan Institute of Physics and Mathematics, Innovation Academy for Precision Measurement Science and Technology, Chinese Academy of Sciences, Wuhan 430071, P. R. China.}
\email{math.yangwen@gmail.com}

\thanks{2010 \textit{Mathematics Subject classification:} 35J20, 58J32.}

\thanks{W. Yang is partially supported by NSFC No. 12171456, No. 12271369 and No. 11871470.}

\begin{abstract}
We study conformal metrics with prescribed Gaussian curvature on surfaces with conical singularities and geodesic boundary in supercritical regimes. Exploiting a variational argument, we derive a general existence result for surfaces with at least two boundary components. This seems to be the first result in this setting. Moreover, we allow to have conical singularities with both positive and negative orders, that is cone angles both less and greater than $2\pi$.
\end{abstract}
\maketitle
{\bf Keywords}: Prescribed Gaussian curvature, conformal metrics, conical singularities, geodesic boundary, variational methods.

\section{Introduction} \label{sec:intro}

\medskip

The prescribed Gaussian curvature problem on a compact surface $M$ under a conformal change of the metric is a classical problem in geometry dating back to Berger \cite{ber}, Kazdan-Warner \cite{kaz-war} and Chang-Yang \cite{chang-yang1, chang-yang2}. Its singular analog on surfaces with conical singularities has been already considered by Picard \cite{pic} and it was later systematically studied by Troyanov \cite{troy}. This problem has been studied for
several decades and there is by now a huge literature on it, see for example \cite{chen-li1,chen-li2,chen-li3,luo-tian,mcow} and the more recent results by Malchiodi and his collaborators using PDE methods \cite{bdmm,bm,bt,car-mal,mal-ru} or by Eremenko, Mondello and Panov using a geometric argument \cite{er,mond-pan1,mond-pan2}. See also Mazzeo-Zhu \cite{mazz-zhu} for a different approach.

\medskip

If $M$ has a boundary, it is then natural to prescribe also the geodesic curvature on $\p M$. For this problem we still do not have a complete picture and there are fewer results mainly concerning the regular case, see \cite{bwz,brend,chang-yang2,che,hw,jim} and the recent results by Malchiodi, Ruiz and their group \cite{bmp,cruz-ru,jlsmr,lsmr}.

The higher dimensional analogue is the well-known problem of prescribing the
scalar curvature on a manifold and mean curvature on the boundary. In particular, the scalar flat case with constant mean curvature takes the name of Escobar problem which has a deep relation with the classical Yamabe problem, see for example \cite{alm,esc,hl,mar} and the references therein.

\medskip

We are interested here in the singular flat geodesic case, namely we study conformal metrics with prescribed Gaussian curvature on surfaces with conical singularities and geodesic boundary. It seems that the only result in this direction is the one by Troyanov \cite{troy} asserting the existence of such metrics in the subcritical case, see the discussion in the sequel. The goal of this paper is to give a first existence result in the supercritical regime, which holds for a large class of singular surfaces. To state it, we need to introduce some notation.

Let $g$ be a metric on $M$. A point $p\in M$ is a conical singularity of order $\a\in(-1,+\infty)$, or angle $\theta_\a=2\pi(1+\a)$, for the metric $g$ if
$$
g(z)=\rho(z)|z|^{2\a}|dz|^2 \quad \mbox{locally around } p,
$$
for some continuous positive function $\rho$. We collect the set of conical singularities $p_j$ of orders $\a_j$ in the formal sum
$$
 \ba=\sum_{j=1}^N \a_j p_j
$$
and denote by $(M,\ba)$ the surface with that set of conical singularities. An important quantity in this study is played by the singular Euler characteristic
$$
\chi(M,\ba)=\chi(M)+\sum_{j=1}^N \a_j.
$$
Here $\chi(M)$ is the Euler characteristic of $M$, that is $\chi(M)=2-\mathfrak{g}-\mathfrak{b}$, where $\mathfrak{g}$ is the genus and $\mathfrak{b}$ is the number of boundary components of $M$. The critical regime of a singular surface is related to the Moser-Trudinger inequality. Following Troyanov \cite{troy} and recalling that we have non-empty boundary, we denote the Trudinger constant of $(M,\ba)$ by
$$
\t(M,\ba)=1+\min_j\left\{\a_j,0\right\}
$$
and give the following definition.

\begin{definition} The singular surface $(M,\ba)$ is:
$$\begin{array}{ll}
\hbox{subcritical}&\hbox{if }\chi(M,\ba)<\t(M,\ba)\\
\hbox{critical}&\hbox{if }\chi(M,\ba)=\t(M,\ba)\\
\hbox{supercritical}&\hbox{if }\chi(M,\ba)>\t(M,\ba).
\end{array}$$
\end{definition}

Observe that in the supercritical case we always have $\chi(M,\ba)>0$. The existence of conformal metrics with prescribed Gaussian curvature on surfaces with conical singularities and geodesic boundary (possibly with corners) in the subcritical regime has been settled down by Troyanov \cite{troy}. Indeed, in this case one is reduced to a minimization problem of a coercive functional. On the contrary, we are not aware of any result concerning the critical/supercritical case. Our main contribution is the following general existence result in the supercritical regime. We will refer to
$$
\G_{\ba}=\biggr\{ 4\pi n +8\pi\sum_{j\in J}(1+\a_j) \,:\, n\in\N\cup\{0\}, \, J\subseteq \{1,\dots,N\} \biggr\}
$$
as the set of critical values. Then, the following holds.

\begin{thm} \label{thm}
Let $(M,\ba)$ be a supercritical singular surface with $\a_j\geq-\frac12$ for $j=1,\dots,N$ and with at least two boundary components. Let $K$ be a positive Lipschitz function on $M$. If $4\pi\chi(M,\ba)\notin\G_{\ba}$, then there exists a conformal metric with Gaussian curvature $K$ on $(M,\ba)$ and geodesic boundary.
\end{thm}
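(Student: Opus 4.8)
The plan is to recast the geometric problem variationally and then run a min--max scheme that is made compact by the hypothesis $4\pi\chi(M,\ba)\notin\G_\ba$. \emph{First (variational reduction).} Fix a smooth background metric $g_0$ on $M$ with geodesic boundary, $k_{g_0}\equiv0$, and absorb the conical data into a fixed function $w_0$, built from Green's functions, with $w_0\sim 2\a_j\log|z-p_j|$ near each $p_j$. Writing the sought metric as $e^{w}g_0$ and $w=w_0+v$, the requirements that the Gaussian curvature equal $K$ and that $\p M$ be geodesic become, for the regular unknown $v$, a singular Liouville equation $-\Delta_{g_0}v=\tilde K e^{v}-c$ in $M$ together with the natural condition $\p_\nu v=0$ on $\p M$, where $\tilde K=Ke^{w_0}$ behaves like $|x-p_j|^{2\a_j}$ near the cones. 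Because the geodesic boundary condition is exactly the natural boundary condition, solutions are the \emph{free} critical points on $H^1(M)$ of a Moser--Trudinger type functional $J_\rho(v)=\tfrac12\int_M|\nabla v|^2+\int_M c\,v-\rho\log\int_M\tilde K e^{v}$, the mass parameter being fixed by Gauss--Bonnet to $\rho=\rho_*:=4\pi\chi(M,\ba)$.

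\emph{Coercivity threshold and compactness.} The sharp singular Moser--Trudinger inequality on $(M,\ba)$ with boundary, whose critical constant is governed precisely by $\t(M,\ba)=1+\min_j\{\a_j,0\}$, makes $J_\rho$ coercive exactly when $\rho<4\pi\t(M,\ba)$; this is the subcritical minimization of Troyanov. In the supercritical case $\rho_*>4\pi\t(M,\ba)$ the functional is unbounded below, so a saddle point is needed. The analytic core is then an a priori bound: by a concentration--compactness and blow--up analysis, a sequence of solutions with parameters $\rho_n\to\rho_*$ either stays bounded in $C^0_{\mathrm{loc}}$ or concentrates with quantized mass, where an interior regular point carries $8\pi$, a boundary point carries $4\pi$ (a half--bubble obtained by reflecting across the geodesic boundary), and a conical point of order $\a_j$ carries $8\pi(1+\a_j)$. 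Every possible blow--up mass therefore lies in $\G_\ba$, and the hypothesis $\a_j\geq-\tfrac12$ keeps the conical quanta $\geq4\pi$ so that this list is exhaustive. Since $\G_\ba$ is closed and $\rho_*\notin\G_\ba$, a whole neighbourhood of $\rho_*$ avoids $\G_\ba$, the solutions of the nearby problems are uniformly bounded, and hence subconverge.

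\emph{Min--max and conclusion.} Using the improved Moser--Trudinger inequality I would identify, up to homotopy, the low sublevels $\{J_{\rho_*}\leq-L\}$ with a space of formal barycenters, that is finite atomic measures of total mass $\rho_*$ supported on $M$ and weighted by $8\pi$ at interior points, $4\pi$ on $\p M$, and $8\pi(1+\a_j)$ at the cones. The assumption of at least two boundary components is exactly what makes this barycenter space non--contractible: since $\p M$ is disconnected, a configuration concentrated on one boundary circle cannot be deformed, within a low sublevel, into one concentrated on another, and this produces a nontrivial homology class. Testing $J_\rho$ on this class for $\rho$ in a neighbourhood of $\rho_*$ gives a min--max level $c(\rho)$ lying strictly above the sublevel. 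To turn this geometry into a genuine critical point I would invoke Struwe's monotonicity trick: a suitable normalisation of $c(\rho)$ is monotone in $\rho$, hence differentiable for a.e.\ $\rho$, and at each such $\rho$ one extracts a bounded Palais--Smale sequence which, by the compactness above, converges to a solution of the $\rho$--problem. Letting $\rho_n\to\rho_*$ along this full--measure set and applying the a priori bound at $\rho_*$ (legitimate since $\rho_*\notin\G_\ba$) yields solutions $v_n$ converging to a critical point $v_*$ of $J_{\rho_*}$; the metric $e^{\,w_0+v_*}g_0$ is then the desired one.

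I expect the two genuine obstacles to sit in the previous two paragraphs. The first is the blow--up analysis carried out simultaneously for interior conical singularities and for the geodesic boundary: one must match the reflected boundary half--bubbles of mass $4\pi$ against the conical bubbles of mass $8\pi(1+\a_j)$ and rule out any other quantum, so that the admissible masses are exactly $\G_\ba$. The second is the precise computation of the homotopy type of the low sublevels, which is where the hypothesis of at least two boundary components is indispensable and below which the min--max degenerates and the problem reverts to the coercive, minimization--type situation already covered by Troyanov.
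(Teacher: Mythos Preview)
Your variational reduction and compactness outline are fine and match the paper. A small correction: the hypothesis $\a_j\ge-\tfrac12$ plays no role in the blow-up classification (the quantization $8\pi(1+\a_j)$ at a cone, $8\pi$ at a regular interior point, $4\pi$ at a boundary point holds for any $\a_j>-1$); its true role is in the topological step below. Also, the paper bypasses Palais--Smale via Lucia's deformation lemma rather than Struwe's monotonicity trick, but either device works here.

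The substantive divergence, and where your argument has a gap, is in the description of low sublevels. You propose to identify $\{J_{\rho_*}\le -L\}$ up to homotopy with a \emph{weighted} barycenter space (weights $8\pi$ interior, $4\pi$ boundary, $8\pi(1+\a_j)$ at cones) and then to argue non-contractibility from the disconnectedness of $\p M$, saying a configuration on one boundary circle cannot be deformed to one on another. Neither step is justified: a full homotopy equivalence with weighted barycenters in the simultaneous presence of boundary and signed cones is not available in the literature and would be a serious undertaking, and your non-contractibility claim is heuristic (nothing prevents a deformation passing through interior-supported configurations). The paper avoids all of this with a much simpler device: because $M$ has at least two boundary components, there is a global retraction $\Pi:M\to B_1$ onto a single boundary circle (this is exactly where the two-components hypothesis enters; a disk does not retract onto its boundary, an annulus does). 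Pushing forward by $\Pi$ turns the ordinary projection $J_\l^{-L}\to M_k$ into a map $\Psi_\Pi:J_\l^{-L}\to (B_1)_k$, while standard boundary-centered bubbles give $\Phi:(B_1)_k\to J_\l^{-L}$ for $\l>4k\pi$; the composition is homotopic to the identity, so $H_*((B_1)_k)\cong H_*(\S^{2k-1})$ injects into $H_*(J_\l^{-L})$. Here the role of $\a_j\ge-\tfrac12$ is that it makes $4\pi$ the smallest local Moser--Trudinger weight, so that for $\l<4(k+1)\pi$ low-energy configurations concentrate at at most $k$ points of $\ov M$ and the projection to $M_k$ (hence to $(B_1)_k$) is legitimate. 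In short, your min--max scaffold is right, but the specific mechanism you invoke for non-contractibility is neither the one in the paper nor one that is clearly workable; the paper's retraction-to-$B_1$ trick is both where the two-boundary-components assumption is actually used and what makes the argument go through cleanly.
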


\smallskip

\begin{rem}
It is not difficult to see from the proof that we can allow to have geodesic boundary with corners. More precisely, if $\p M=B_1\sqcup\dots\sqcup B_m$ we can treat the case where the boundary components $B_l$, $l>1$, have corners at the points $q_j\in B_l$ of angles $\theta_{\b_j}=2\pi\left(\frac12+\b_j\right)$ with $\b_j>0$, see Lemma~\ref{lem:ret}.
\end{rem}

\smallskip

\begin{rem}
We stress that we allow to have conical singularities with both positive and negative orders, that is cone angles both less and greater than $2\pi$, provided that $\a_j\geq-\frac12$, see the discussion below.
\end{rem}

\smallskip

The argument is based on Morse theory in the spirit of \cite{bdmm}, where the closed surface (empty boundary) case is considered, by studying the Liouville PDE \eqref{liouv} and its associated functional $J_\l$ given in \eqref{funct}. More precisely, the desired conformal metric will be realised as a min-max solution of \eqref{liouv}, which in turn is produced by the topological changes in the structure of sublevels of $J_\l$. Indeed, high sublevels have trivial topology while we will show that low sublevels are non-contractible.

By means of improved Moser-Trudinger inequalities, the low sublevels can be described by some formal barycenters of $M$, that is family of unit measures supported in a finite number of points of $\ov{M}$. Compared to the classical Liouville equation, the difficulties are due both to the presence of conical singularities and the boundary $\p M$. Indeed, the unit measures may be supported around a conical singularity or on the boundary, which makes the analysis highly non-trivial. We tackle this problem with the following rough idea: we define the \emph{weight} of a point $p\in \ov M$ according to a local Moser-Trudinger inequality around that point, which gives a local volume control in terms of the Dirichlet energy, see Lemma \ref{lem:mt}. We get an \emph{amount} of $8\pi$ near regular points, $8\pi\left(1+\min\{\a_j,0\}\right)$ around a singular point and $4\pi$ for points lying on the boundary $\p M$ (this is natural since the volume around a point on the boundary is half the volume of a point in the interior of the surface). The idea is then to retract the barycenters of $M$ onto those of a boundary component of $M$, where the points have the smallest weight. Here we use the assumptions $\a_j\geq-\frac12$ and the fact that the surface has at least two boundary components. Indeed, we can prove that the barycenters of a boundary component embed non-trivially into arbitrarily low sublevels of $J_\l$ yielding non-trivial topology of the latter.

We stress once more that this idea allows us to consider conical singularities with both positive and negative orders with a simple unified approach. Up to now, positive and negative singularities have been studied separately with different arguments, see for example \cite{bdmm,bm,mal-ru} for the positive case and \cite{car-mal} for the negative one. We mention here that arbitrarily signed singularities have been considered in the sign-changing prescribed curvature problem in \cite{DM-LS} (see also \cite{DM-LS-R}) and for Toda systems in \cite{bat2} with different methods.

\medskip

The paper is organized as follows. In section \ref{sec:liouv} we introduce the Liouville PDE and in section \ref{sec:proof} we prove the main result.

\

\section{The Liouville equation} \label{sec:liouv}

\medskip

In this section we set up the PDE approach. Let $g_0$ be a smooth metric representing any given conformal structure on $M$ and consider a new metric $g=e^u g_0$ with conical singularities at the points $p_j$. The curvatures then transform according to the following law:
\be \label{liouv}
\left\{\begin{array}{ll}
-\D u +2 K_0 = 2 K e^u  & \text{ in } M\setminus\{p_1,\dots,p_N\}, \vspace{0.2cm}\\
\dfrac{\p u}{\p \nu} +2 h_0 = 2he^\frac{u}2  &\text{ on } \p M,\end{array}\right.
\ee
where $\D=\D_{g_0}$ stands for the Laplace-Beltrami operator associated to the metric $g_0$ and $\nu$ is the outward normal vector to $\p M$. Here $K_0$, $K$ are the Gaussian curvatures and $h_0$, $h$ are the  geodesic curvature of the boundary with respect to metrics $g_0$ and $g$, respectively. The conical singularities are encoded by the behavior
$$
u(z)= 2\a_j\log|z|+O(1) \quad \mbox{locally around } p_j.
$$
First, we desingularize the behavior of $u$ around the conical points by writing
$$
v= u+4\pi\sum_{j=1}^N\a_j G_{p_j},
$$
where $G_{p_j}$ is the fundamental solution of the Laplace equation
on $M$ with pole at $p_j$, i.e. the unique solution to
$$
\left\{\begin{array}{ll}
-\D G_{p_j}=\d_{p_j}-\dfrac{1}{|M|}  & \text{ in } M, \vspace{0.2cm}\\
\dfrac{\p G_{p_j}}{\p \nu} = 0  &\text{ on } \p M,\end{array}\right.
$$
with $\int_M G_{p_j}=0$, where $\d_{p_j}$ is the Dirac delta with pole $p_j$ and $|M|$ is the area of $M$. Here and in the rest of the paper the volume form is induced by the metric $g_0$. Next, we can always assume that our initial metric has constant Gaussian curvature and that $\p M$ is geodesic, see for example Proposition 3.1 in \cite{lsmr}. Since we are interested in target metrics inducing geodesic boundary, we are led to consider the following problem:
\be \label{liouv2}
\left\{\begin{array}{ll}
-\D v +2 K_0 +\dfrac{4\pi}{|M|}\displaystyle\sum_{j=1}^N\a_j=  K_{\ba}e^v  & \text{ in } M, \\
\dfrac{\p v}{\p \nu} =0  &\text{ on } \p M,\end{array}\right.
\ee
where $K_0\equiv const.$ and $K_{\ba}=2K e^{-4\pi\sum_{j=1}^N\a_j G_{p_j}}$. Observe that we have the singular behavior
$$
K_{\ba}(z)=C|z|^{2\a_j}(1+O(1)) \quad \mbox{locally around } p_j.
$$
Now, by the Gauss-Bonnet formula (recall that we have flat geodesic) one necessarily has
$$\int_M K_{\ba}e^v = \int_M 2 K_0 + 4\pi\sum_{j=1}^N\a_j=4\pi\chi(M) + 4\pi\sum_{j=1}^N\a_j = 4\pi\chi(M,\ba).$$
We can thus rewrite the problem \eqref{liouv2} as
\be \label{liouv3}
\left\{\begin{array}{ll}
-\D v +2 K_0 +\dfrac{4\pi}{|M|}\displaystyle\sum_{j=1}^N\a_j= \l\dfrac{K_{\ba}e^v}{\int_M K_{\ba}e^v}  & \text{ in } M, \\
\dfrac{\p v}{\p \nu} =0  &\text{ on } \p M,\end{array}\right.
\ee
with $\l=4\pi\chi(M,\ba)$. Therefore, the main existence result in Theorem \ref{thm} will follow once we prove the following result.
\begin{thm} \label{thm2}
Let $M$ be a surface with at least two boundary components. Let $\a_j\geq-\frac12$ for $j=1,\dots,N$ and let $K$ be a positive Lipschitz function on $M$. If $\l\notin\G_{\ba}$, then there exists a solution to \eqref{liouv3}.
\end{thm}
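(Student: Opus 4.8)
The plan is to realise the solution of \eqref{liouv3} as a critical point of the associated energy functional
\be
J_\l(v)=\frac12\int_M|\n v|^2\,+\,\int_M\left(2K_0+\frac{4\pi}{|M|}\sum_{j=1}^N\a_j\right)v\,-\,\l\log\int_M K_{\ba}e^v,\qquad v\in H^1(M),
\ee
whose Euler--Lagrange equation, the boundary condition being natural, is exactly \eqref{liouv3}. Since $\a_j\geq-\frac12>-1$ the weight $K_{\ba}$ is integrable and, by Moser--Trudinger, $J_\l$ is of class $C^1$. In the supercritical regime $\l=4\pi\chi(M,\ba)$ lies above the Moser--Trudinger threshold, so $J_\l$ is unbounded below and direct minimisation fails; I would instead detect a critical point by a min--max (Morse-theoretic) scheme, following \cite{bdmm}. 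The easy half is that very high sublevels $\{J_\l\leq b\}$ deformation retract onto the contractible space $H^1(M)$; the whole point is to show that very low sublevels $\{J_\l\leq -L\}$ are \emph{not} contractible, so that the change of topology forces a critical level in between.

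The heart of the matter is the analysis of the low sublevels through the localized Moser--Trudinger inequalities of Lemma~\ref{lem:mt}. To each $p\in\ov M$ I attach the weight given by the local volume control: $8\pi$ at a regular interior point, $8\pi(1+\min\{\a_j,0\})$ at a conical point $p_j$, and $4\pi$ at a boundary point. The hypothesis $\a_j\geq-\frac12$ is precisely what guarantees $8\pi(1+\a_j)\geq 4\pi$, i.e. the \emph{boundary carries the minimal weight} $4\pi$ among all points of $\ov M$. A covering argument then shows that if $J_\l(v)\leq-L$ with $L$ large, the normalised measure $\m_v=K_{\ba}e^v\big/\int_M K_{\ba}e^v$ concentrates, in the distributional sense, on at most $k$ points of $\ov M$, where $k=\lfloor\l/4\pi\rfloor$ is fixed by the minimal boundary weight; the condition $\l\notin\G_{\ba}$ ensures $\l\notin 4\pi\N$ and keeps $k$ well defined. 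This produces a continuous projection from $\{J_\l\leq-L\}$ to the space of formal barycenters $\M_k(\ov M)$ of unit measures supported on at most $k$ points.

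Next I would collapse $\M_k(\ov M)$ onto the barycenters $\M_k(B_1)$ of a single boundary component $B_1$. Because boundary points are the cheapest (weight $4\pi$), mass can be transported to $B_1$ without raising the energy, and this is globalised by a retraction of $\ov M$ onto $B_1$. Here the hypothesis of at least two boundary components enters: when $\mathfrak{b}\geq 2$ the class $[B_1]$ is non-trivial (indeed primitive) in $H_1(M)$, so a retraction $\ov M\to B_1$ exists, whereas for $\mathfrak{b}=1$ the boundary bounds and no such retraction is available; cf. Lemma~\ref{lem:ret}. Conversely, a family of test functions modelled on boundary bubbles gives a map $\M_k(B_1)\to\{J_\l\leq-L\}$ which, composed with the projection, is homotopic to the identity on $\M_k(B_1)$. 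Since $\M_k(B_1)\simeq\M_k(\S^1)$ is non-contractible (it has non-trivial homology), it follows that $\{J_\l\leq-L\}$ is non-contractible, giving the required topological change.

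Finally, the change of topology between high and low sublevels produces a min--max value. As the Palais--Smale condition cannot be checked directly, I would use Struwe's monotonicity trick: the scheme applies to the family $J_{\l'}$ for $\l'$ near $\l$, and monotonicity in $\l'$ yields bounded Palais--Smale sequences, hence genuine solutions $v_{\l'}$, for almost every $\l'$. To conclude at the prescribed value I let $\l'\to\l$ and invoke compactness of the solution set, which is where $\l\notin\G_{\ba}$ is used a second time: $\G_{\ba}$ is precisely the set of admissible total masses of blow-up limits (a boundary blow-up contributes $4\pi$, an interior one $8\pi$, a singular one $8\pi(1+\a_j)$), so avoiding it rules out concentration and gives the uniform bounds needed to pass to the limit. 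I expect the main obstacle to be the step dealing with the low sublevels --- simultaneously handling interior regular points, conical singularities of either sign, and boundary points with their three different weights, and in particular making the retraction onto $\M_k(B_1)$ and the energy-controlled test-function family precise.
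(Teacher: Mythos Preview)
Your proposal is correct and follows essentially the same architecture as the paper: localized Moser--Trudinger inequalities with the three weights $8\pi$, $8\pi(1+\min\{\a_j,0\})$, $4\pi$; the observation that $\a_j\ge-\tfrac12$ makes the boundary the cheapest; projection of low sublevels onto $M_k$; push-forward via a retraction $\ov M\to B_1$ (available precisely when $\mathfrak b\ge2$); boundary bubbles giving $(B_1)_k\hookrightarrow J_\l^{-L}$; and non-contractibility of $(B_1)_k\simeq(\S^1)_k\simeq\S^{2k-1}$.

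The only substantive difference is how you bypass Palais--Smale. You run Struwe's monotonicity trick to produce solutions for a.e.\ $\l'$ near $\l$ and then pass to the limit using blow-up compactness (which is where $\l\notin\G_{\ba}$ enters). The paper instead uses Lucia's deformation lemma \cite{lucia}: once compactness of the solution set is known for $\l\notin\G_{\ba}$ (Proposition~\ref{pro:comp}), one obtains directly that $J_\l^a$ is a deformation retract of $J_\l^b$ whenever there are no critical levels in $[a,b]$, without perturbing $\l$. Both routes are standard and valid; Lucia's lemma is slightly cleaner here since it avoids the auxiliary sequence $\l'\to\l$ and uses the hypothesis $\l\notin\G_{\ba}$ only once, but your version works equally well. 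One small remark: the phrase ``mass can be transported to $B_1$ without raising the energy'' is heuristic --- the actual mechanism, as you then say, is simply the push-forward $\Pi_*:M_k\to(B_1)_k$ induced by the retraction, with no energy comparison needed.
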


\medskip

The proof will be presented in the next section and is based on the variational structure of the problem. Since the equations in \eqref{liouv3} are
invariant up to an additive constant, we shall restrict ourselves to the subspace of functions with zero average
$$\ov{H}^1(M)=\left\{v\in H^1(M) \,:\, \int_M v=0\right\}$$
and look for critical points of the Euler-Lagrange functional
\be \label{funct}
J_\l(v)=\frac12\int_M |\n v|^2 -\l\log\int_M K_{\ba}e^v, \quad v\in\ov{H}^1(M).
\ee

\

\section{The proof of the main result} \label{sec:proof}

\medskip

By the discussion in the previous section, the existence of a conformal metric as described in Theorem \ref{thm} will follow by solving \eqref{liouv3}. Therefore, we prove here Theorem \ref{thm2} by looking at the functional $J_\l$ given in \eqref{funct}. We are inspired here by the argument proposed in \cite{bdmm}, where the closed surface case is considered. The aim will be to detect
a change of topology between its sublevels
$$
J_\l^a=\left\{v\in\ov{H}^1(M) \,:\, J_\l(v)\leq a\right\}.
$$
We start with the following general blow-up picture, referring to \cite{bt} for what concerns the case of positive singularities, to \cite{bar-mon} for negative singularities and to \cite{bat,ww} for boundary blow-up.
\begin{pro} \label{pro:comp} \emph{(\cite{bar-mon,bt,bat,ww})}
Let $(v_n)_n$ be a sequence of solutions to \eqref{liouv3} with $\l_n\to\l$. Then, up to a subsequence, one of the following alternatives holds:
\begin{itemize}

\item[1.] \emph{(Compactness):} $v_n$ are uniformly bounded.

\medskip

\item[2.] \emph{(Blow-up):} $\max_{\ov M} v_n\to+\infty$ and there exists a finite blow-up set $S=\{q_1,\dots,q_m\}$ such that
$$
\l_n\dfrac{K_{\ba}e^{v_n}}{\int_M K_{\ba}e^{v_n}} \rightharpoonup \sum_{j=1}^m \s_j \d_{q_j}
$$
in the sense of measures, where
$$
\left\{\begin{array}{ll}
 \s_j=8\pi(1+\a_j) & \text{ if } q_j=p_j, \\
 \s_j=8\pi & \text{ if } q_j\in M\setminus\{p_1,\dots,p_N\}, \\
 \s_j=4\pi & \text{ if } q_j\in\p M.\end{array}\right.
$$
\end{itemize}

\medskip

In particular, if $\l\notin\G_{\ba}$ then $v_n$ are uniformly bounded.
\end{pro}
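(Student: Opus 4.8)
The plan is to treat the dichotomy between compactness and concentration as the analytic input supplied by the blow-up analyses cited in \cite{bar-mon,bt,bat,ww}, and to reduce the final assertion to a purely arithmetic identification of the admissible blow-up masses with the set $\G_{\ba}$. Concretely, suppose alternative~2 occurs along a subsequence. Testing the weak convergence
$$
\l_n\frac{K_{\ba}e^{v_n}}{\int_M K_{\ba}e^{v_n}} \wk \sum_{j=1}^m \s_j\,\d_{q_j}
$$
against the constant function $1\in C(\ov M)$, and using $\l_n\to\l$, forces the total-mass identity $\l=\sum_{j=1}^m\s_j$ (no residual regular part survives, since the stated alternative gives a purely atomic weak limit). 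Thus it suffices to show that every such sum lies in $\G_{\ba}$: if $\l\notin\G_{\ba}$, alternative~2 is impossible and compactness must hold.

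For the arithmetic step I would partition the blow-up set $S$ according to the three types in the statement. Let $J\subseteq\{1,\dots,N\}$ index the conical points $p_j$ at which blow-up occurs, let $k$ be the number of regular interior blow-up points and $l$ the number of boundary blow-up points; these three classes are disjoint, the $p_j$ being interior points. Summing the corresponding weights $\s_j$ gives
$$
\l=\sum_{j\in J}8\pi(1+\a_j)+8\pi k+4\pi l=8\pi\sum_{j\in J}(1+\a_j)+4\pi(2k+l).
$$
Setting $n:=2k+l\in\N\cup\{0\}$, this is exactly an element of $\G_{\ba}$, which closes the argument by contraposition.

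The only genuinely analytic content — and the main obstacle were one to prove the dichotomy from scratch rather than cite it — lies in establishing the quantized local masses $\s_j$. Near a regular interior point this is the classical $8\pi$ quantization for the Liouville equation, coming from the classification of finite-mass entire solutions of $-\D w=e^w$; near a conical point $p_j$ one uses the singular Liouville equation $-\D w=|x|^{2\a_j}e^w$, whose finite-mass entire solutions carry total mass $8\pi(1+\a_j)$; near a boundary point the homogeneous Neumann condition $\frac{\p v}{\p\nu}=0$ allows an even reflection across $\p M$, turning a boundary bubble into an interior bubble of mass $8\pi$ and hence assigning half of it, $4\pi$, to the boundary point. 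Verifying that these are the \emph{only} admissible local masses, that the blow-up set $S$ is finite, and in particular controlling the delicate interaction of concentration with both the conical points and the boundary, is precisely what \cite{bar-mon,bt,bat,ww} provide; granting those results, the proof reduces to the bookkeeping above.
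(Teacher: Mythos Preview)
Your proposal is correct and matches the paper's treatment: the paper does not prove this proposition at all but simply cites \cite{bar-mon,bt,bat,ww} for the dichotomy and the quantized local masses, leaving the ``in particular'' claim implicit. You go slightly further by spelling out the arithmetic that places $\l=\sum_j\s_j$ in $\G_{\ba}$ via $n=2k+l$, which is exactly the intended (and straightforward) verification.
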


\medskip

The latter compactness property is needed to bypass the Palais-Smale condition, as it was shown in \cite{lucia}, where a deformation lemma is used to derive the following crucial result.
\begin{lem} \label{lem:def}
Suppose $\l\notin\G_{\ba}$ and that $J_\l$ has no critical levels inside $[a,b]$. Then, $J_\l^a$ is a deformation retract of $J_\l^b$.
\end{lem}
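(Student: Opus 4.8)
The plan is to produce the retraction by following the negative (pseudo-)gradient flow of $J_\l$, exactly as in the classical deformation lemma of Morse theory. Since $J_\l$ is only $C^1$ on $\ov H^1(M)$, I would first fix a locally Lipschitz pseudo-gradient vector field $W$ for $J_\l$ on the set of regular points and consider the flow $\eta\colon [0,\infty)\times\ov H^1(M)\to\ov H^1(M)$ solving $\p_t\eta=-\psi(\eta)\,W(\eta)$, where $\psi$ is a locally Lipschitz cut-off localizing the deformation to the strip $\{a\le J_\l\le b\}$ and vanishing on $J_\l^a$. The goal is to show that every trajectory starting in $J_\l^b$ enters $J_\l^a$ in finite time, and that the time-$T$ map is continuous and fixes $J_\l^a$ pointwise, so that it furnishes the desired deformation retraction onto $J_\l^a$.

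The main obstacle is that $J_\l$ does \emph{not} satisfy the Palais--Smale condition globally: along a sequence on which the energy stays bounded and $J_\l'\to 0$, the conformal volume $\frac{K_{\ba}e^{v_n}}{\int_M K_{\ba}e^{v_n}}$ may concentrate, so no subsequence need converge. Consequently the classical quantitative deformation lemma, which would extract a uniform lower bound $\|J_\l'\|\ge\delta>0$ on the strip from compactness, is not available off the shelf. This is precisely the point at which the compactness statement of Proposition \ref{pro:comp} must be brought in, and it is the heart of the argument.

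The way around this is the device of Lucia \cite{lucia}, tailored to functionals of the form $J_\l(v)=\frac12\int_M|\n v|^2-\l\log\int_M K_{\ba}e^v$ whose nonlinear part has \emph{compact} gradient. The Euler--Lagrange operator then reads $J_\l'(v)=v-\l\,T(v)$, with $T$ the compact map induced by $v\mapsto\frac{K_{\ba}e^{v}}{\int_M K_{\ba}e^{v}}$ through the Moser--Trudinger embedding. I would use this structure to analyze the would-be obstructions to the flow: a sequence on the strip along which $J_\l'\to 0$ either converges, in which case its limit is a critical point with critical value in $[a,b]$ and is excluded by hypothesis, or it blows up. Because $\l\notin\G_{\ba}$, the quantization of Proposition \ref{pro:comp} rules out the blow-up alternative, since the concentration masses would force $\l\in\G_{\ba}$. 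Hence no obstruction survives, a uniform gradient bound on the strip is recovered, and the flow reaches $J_\l^a$ in controlled time.

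The hard part will be making this dichotomy rigorous for \emph{approximate} critical sequences rather than exact solutions, since Proposition \ref{pro:comp} as stated concerns sequences of genuine solutions of \eqref{liouv3}. Lucia's construction circumvents a full blow-up analysis of Palais--Smale sequences by building the deformation directly from the compactness of the solution set at the fixed parameter $\l$, exploiting the compactness of $T$. I would therefore invoke this framework, verifying only that our $J_\l$ meets its structural hypotheses: compactness of the nonlinearity, $C^1$ regularity, and the solution-compactness supplied by Proposition \ref{pro:comp} when $\l\notin\G_{\ba}$. Once these are checked, the conclusion that $J_\l^a$ is a deformation retract of $J_\l^b$ follows from \cite{lucia}.
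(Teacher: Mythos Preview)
Your proposal is correct and matches the paper's approach: the paper does not give a self-contained proof but simply invokes Lucia's deformation lemma \cite{lucia}, noting that the compactness of solutions from Proposition~\ref{pro:comp} (available since $\l\notin\G_{\ba}$) replaces the failing Palais--Smale condition. Your write-up is a faithful expansion of exactly this strategy.
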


\medskip

Next, by Proposition \ref{pro:comp}, $J_\l$ has no critical points above some high level \mbox{$b\gg0$}. Therefore, the deformation Lemma \ref{lem:def} can be applied to obtain the following topological property (see also Corollary 2.8 in \cite{mal}).
\begin{pro} \label{pro:contr}
Suppose $\l\notin\G_{\ba}$. Then, there exists $b \gg 0$ such that $J_\l^b$ is a deformation retract of $\ov H^1(M)$ and it is thus contractible.
\end{pro}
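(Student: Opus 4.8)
The plan is to construct an explicit deformation retraction of the whole space $\ov{H}^1(M)$ onto a sufficiently high sublevel by means of a gradient-type flow, the two essential ingredients being the absence of critical points above a suitable level and the compactness furnished by Proposition \ref{pro:comp}.

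First I would fix the level $b$. The critical points of $J_\lambda$ in $\ov{H}^1(M)$ are exactly the solutions of \eqref{liouv3}, so under the hypothesis $\lambda\notin\Gamma_{\ba}$ Proposition \ref{pro:comp} guarantees that they form a uniformly bounded set in $\ov{H}^1(M)$. Consequently the values attained by $J_\lambda$ on its critical points are bounded by some constant $C$, and I would choose any $b>C$. With this choice $J_\lambda$ has no critical levels in $[b,+\infty)$; in particular, applying Lemma \ref{lem:def} on each interval $[b,a]$ with $a>b$ already shows that $J_\lambda^b$ is a deformation retract of every finite sublevel $J_\lambda^a$.

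Next I would upgrade these retractions of finite sublevels to a single global retraction of $\ov{H}^1(M)$. The natural device is the (pseudo-)gradient flow $\eta(s,\cdot)$ of $J_\lambda$, along which $J_\lambda$ is nonincreasing. Since $J_\lambda$ has no critical points at levels $\geq b$, and since the compactness of Proposition \ref{pro:comp} plays the role of the Palais--Smale condition --- this is precisely the mechanism exploited in \cite{lucia} to obtain the deformation lemma --- one can arrange that every trajectory enters $J_\lambda^b$ after a finite time $T(v_0)$. Defining the retraction by flowing along $\eta$ for time $\min\{s,T(v_0)\}$, that is, flowing downward until the trajectory first meets the level $b$ and then stopping, yields a continuous map $\ov{H}^1(M)\times[0,1]\to\ov{H}^1(M)$ that fixes $J_\lambda^b$ and carries $\ov{H}^1(M)$ into $J_\lambda^b$, i.e.\ a deformation retraction.

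Finally, since $\ov{H}^1(M)$ is a Hilbert space, hence convex and thus contractible, and contractibility is invariant under homotopy equivalence, the retract $J_\lambda^b$ is contractible as well. The step I expect to be the main obstacle is the global one: the functional $J_\lambda$ fails the usual Palais--Smale condition because of the conformal invariance and the possibility of blow-up, so one cannot simply invoke an abstract gradient-flow deformation. The point is to use the concrete compactness of solutions in Proposition \ref{pro:comp}, rather than abstract Palais--Smale, to ensure that no trajectory stalls above level $b$ without approaching a critical point, which is what secures the finiteness of $T(v_0)$ and the continuity of the retraction; this is exactly the content provided by the deformation machinery of \cite{lucia}.
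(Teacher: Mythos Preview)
Your proposal is correct and follows essentially the same route as the paper: use the compactness of Proposition~\ref{pro:comp} to bound the critical levels, then invoke the deformation machinery of \cite{lucia} (which underlies Lemma~\ref{lem:def}) above that level, and conclude by contractibility of the ambient Hilbert space. The paper's own argument is in fact just a one-line reference to Lemma~\ref{lem:def} together with Corollary~2.8 in \cite{mal} for the passage from finite sublevels to all of $\ov H^1(M)$, which is precisely what you have spelled out via the gradient-flow construction.
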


\medskip

We are left with showing that the low sublevels of $J_\l$ are non-contractible. To this end we will need improved versions of the Moser-Trudinger (Troyanov) inequality. This is done by means of a localized version of the Moser-Trudinger inequality, which is based on cut-off functions and spectral decomposition, see for example the approach in Proposition 2.3 in \cite{mal-ru2}, where the Toda system is considered. This idea has its origin in \cite{chen-li1} and has been then adapted by many authors, see for example \cite{car-mal} and \cite{dj} for the singular and regular case, respectively. Observe that the inequality depends on whether we are localizing it around a regular point, a conical point or at the boundary.
\begin{lem} \emph{(\cite{car-mal,chen-li1,dj})} \label{lem:mt} Let $\d>0$ and let $\O\subset \wtilde\O \subset \ov M$ be such that $d\left(\O,\p\wtilde\O\right)>\d$.

\begin{itemize}

\item[1.] \emph{(Regular case):} if $d\left(\wtilde\O,p_j\right)>\d$ for all $j$'s and $d\left(\wtilde\O,\p M\right)>\d$, then there exists $C_{\e,\d}>0$ such that for all $v\in \ov H^1(M)$
$$
8\pi\log \int_{\O} K_{\ba}e^v \leq \frac{1}{2} \int_{\wtilde\O} |\n v|^2+\e\int_M |\n v|^2+ C.
$$

\medskip

\item[2.] \emph{(Singular case):} if $p_j\in\O$ for some $j$, $d\left(\wtilde\O,p_l\right)>\d$ for all $l\neq j$ and $d(\O,\p M)>\d$, then there exists $C_{\e,\d}>0$ such that for all $v\in \ov H^1(M)$
$$
8\pi\left(1+\min\{\a_j,0\}\right)\log \int_{\O} K_{\ba}e^v \leq \frac{1}{2} \int_{\wtilde\O} |\n v|^2+\e\int_M |\n v|^2+ C.
$$

\medskip

\item[3.] \emph{(Boundary case):} if $d\left(\wtilde\O,p_j\right)>\d$ for all $j$'s, then there exists $C_{\e,\d}>0$ such that for all $v\in \ov H^1(M)$
$$
4\pi\log \int_{\O} K_{\ba}e^v \leq \frac{1}{2} \int_{\wtilde\O} |\n v|^2+\e\int_M |\n v|^2+ C.
$$
\end{itemize}
\end{lem}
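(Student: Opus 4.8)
The plan is to reduce each of the three cases to the corresponding \emph{sharp global} Moser--Trudinger inequality and then to \emph{localize} it by a cut-off argument, using a spectral decomposition to make the loss on the global Dirichlet energy as small as $\e$. First I would recall the three global inequalities fixing the constants. On the closed surface one has the Moser--Trudinger--Fontana inequality $8\pi\log\int_M e^w\leq\frac12\int_M|\n w|^2+8\pi\,\bar w_M+C$ for every $w\in H^1(M)$, with $\bar w_M$ the average. In the presence of a conical point the weight $K_{\ba}\sim|z|^{2\a_j}$ enters, and Troyanov's singular inequality lowers the admissible constant to $8\pi(1+\a_j)$ when $\a_j<0$; when $\a_j\geq0$ it is cheaper to concentrate at a regular point than at $p_j$, so the effective constant reverts to $8\pi$, which is exactly $8\pi(1+\min\{\a_j,0\})$. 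Near the boundary, after flattening $\p M$ one reflects across the (geodesic) boundary: doubling the domain doubles both the energy and the mass, so a boundary point carries half the weight and the sharp constant becomes $4\pi$ (equivalently one invokes the half-space Moser--Trudinger inequality directly).

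Next comes the localization, which I would carry out uniformly in the three cases. Fix a cut-off $\eta\in C_c^\infty\left(\wtilde\O\right)$ with $\eta\equiv1$ on $\O$, $0\leq\eta\leq1$ and $|\n\eta|\leq C/\d$, which is possible since $d\left(\O,\p\wtilde\O\right)>\d$; the hypotheses on the distances to the $p_l$'s and to $\p M$ ensure that on $\mathrm{supp}\,\eta$ we are genuinely in the regular, singular or boundary situation, so the appropriate global inequality applies to $w:=\eta(v-\bar v)$ for a suitable reference constant $\bar v$ (an average over a collar of $\O$). Since $w=v-\bar v$ on $\O$, one obtains $\L\log\int_\O K_{\ba}e^v\leq\frac12\int_M|\n w|^2+\L\,\bar v+C$ with $\L$ the relevant sharp constant, and
\[
\tfrac12\int_M|\n w|^2=\tfrac12\int_{\wtilde\O}\eta^2|\n v|^2+\int_{\wtilde\O}\eta(v-\bar v)\,\n v\cdot\n\eta+\tfrac12\int_{\wtilde\O}(v-\bar v)^2|\n\eta|^2.
\]
The first term is bounded by $\frac12\int_{\wtilde\O}|\n v|^2$, the main term we want to isolate; the term $\L\,\bar v$ is handled by Poincaré together with Young's inequality, $\L\,\bar v\leq C\|\n v\|_{L^2}\leq\e\int_M|\n v|^2+C_\e$, while the two remaining integrals, supported where $\n\eta\neq0$, are the errors that must be absorbed into $\e\int_M|\n v|^2+C$.

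The main obstacle is precisely this last absorption, and it is where the argument of \cite{chen-li1,car-mal,dj} is genuinely needed: a crude estimate bounds the cut-off errors only by $C_\d\int_{\wtilde\O\setminus\O}|\n v|^2$, with an $O(1)$ rather than $O(\e)$ constant, which would destroy the sharpness of the local coefficient $\frac12$. To gain the factor $\e$ I would split $v=v_1+v_2$ along the eigenfunctions of the Neumann Laplacian on $M$, with $v_1$ the projection onto the eigenspaces of eigenvalue $\leq\l_N$ and $v_2$ the high-frequency remainder. The decisive point is that $\|v_2\|_{L^2(M)}^2\leq\l_{N+1}^{-1}\int_M|\n v|^2$, so the high-frequency contribution to the error is as small as desired once $N=N(\e,\d)$ is large, giving precisely the coefficient $\e$; the low-frequency part is treated multiplicatively, writing $K_{\ba}e^v=\big(K_{\ba}e^{v_2}\big)e^{v_1}$ and controlling $e^{v_1}$ as a slowly varying factor through $\|v_1\|_{L^\infty(\wtilde\O)}\leq C_N\|\n v\|_{L^2}$ (all norms being equivalent on the finite-dimensional span), again absorbed by Young's inequality with the large constant $C_N$ entering only the additive term. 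Reconciling the local main term $\frac12\int_{\wtilde\O}|\n v|^2$ produced by $v_2$ with the energy of the full $v$ is the delicate bookkeeping at the heart of the proof. Once this is done, the three displayed inequalities follow with the stated sharp constants $8\pi$, $8\pi\left(1+\min\{\a_j,0\}\right)$ and $4\pi$, the regular, singular and boundary cases differing only in the value of $\L$ and, for the last one, in the preliminary reflection step across $\p M$.
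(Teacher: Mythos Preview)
Your outline is consistent with the paper: the paper does not actually prove this lemma but only cites \cite{chen-li1,car-mal,dj} and remarks that the localization ``is based on cut-off functions and spectral decomposition'' (referring to Proposition~2.3 in \cite{mal-ru2}). Your sketch follows precisely this standard route---global sharp Moser--Trudinger/Troyanov inequality with the appropriate constant, cut-off to $\wtilde\O$, and spectral splitting $v=v_1+v_2$ to absorb the cut-off errors into $\e\int_M|\nabla v|^2$---so there is nothing to correct or compare.
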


\medskip

The above different scenarios make the Morse approach for Liouville equations in supercritical regimes quite challenging, especially in the case where both positive and negative singularities are present. Here we avoid such complexity by using the following idea: we define the \emph{weight} of a point $p\in \ov M$ according to the constant in the above local Moser-Trudinger inequalities around that point, which indicates the local volume control in terms of the Dirichlet energy. Therefore, regular points have weight $8\pi$, singular points $8\pi\left(1+\min\{\a_j,0\}\right)$ and boundary points $4\pi$. The idea is to focus on points with the smallest weight, that is points on the boundary (recall $\a_j\geq-\frac12$), through a suitable retraction, see the discussion later on. Roughly speaking, we will prove that the boundary generates non-trivial homology of the low sublevels of $J_\l$. This is done by describing the functions in the low sublevels by means of configurations supported on the boundary and by showing that the non-trivial homology groups of the boundary inject into the homology groups of the low sublevels, see \eqref{inj} for more details.

\medskip

To this end, we start by observing that, in Lemma \ref{lem:mt}, a concentration of the conformal volume $K_{\ba}e^v$ in $\O$, in the sense
$$
\int_{\O} \frac{K_{\ba}e^v }{\int_M K_{\ba}e^v} \geq \g, \quad \mbox{for some } \g>0,
$$
would give a global volume control in terms of the Dirichlet energy. We conclude that whenever $K_{\ba}e^v$ is concentrated in different regions of $\ov M$ an improved Moser-Trudinger inequality holds just by summing up the local inequalities. Improved inequalities in turn give improved lower bounds on the functional $J_\l$. Therefore, in the low sublevels, $K_{\ba}e^v$ cannot be concentrated in too many different regions, i.e. we have the following property. Here we have just to observe that, since $\a_j\geq-\frac12$, any local Moser-Trudinger inequality in Lemma \ref{lem:mt} gives a volume control of at least $4\pi$.
\begin{lem}
Suppose $\l < 4(k+1)\pi$. Then, for any $\e,r>0$, there exists $L=L(\e,r)>0$ such that for any $v\in J_\l^{-L}$ there exist $k$ points $\{q_1,\dots,q_k\}\subset \ov M$ such that
\be\label{conc}
\int_{\bigcup_{i=1}^k B_r(q_i)} \frac{K_{\ba}e^v }{\int_M K_{\ba}e^v} \geq 1-\e.
\ee
\end{lem}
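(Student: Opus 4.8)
The plan is to argue by contradiction, combining a covering argument with the additivity of the improved inequalities in Lemma~\ref{lem:mt}. If the statement were false for some $\varepsilon, r > 0$, I would obtain a sequence $(v_n)_n$ with $J_\lambda(v_n) \to -\infty$ such that the normalized conformal volume $\mu_n := K_{\ba} e^{v_n} \big/ \int_M K_{\ba} e^{v_n}$, a probability measure on $\ov M$, satisfies $\int_{\bigcup_{i=1}^k B_r(q_i)} \mu_n < 1 - \varepsilon$ for \emph{every} choice of $k$ points $q_1, \dots, q_k \in \ov M$. The goal is to convert this failure of concentration into a quantitative spreading of the mass of $\mu_n$ over $k+1$ separated regions.

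First I would invoke the standard covering lemma employed in this circle of ideas (see \cite{bdmm} and the references therein): for fixed $\varepsilon, r > 0$ and $k \in \N$ there exist $\bar r = \bar r(\varepsilon, r, k) > 0$ and $\gamma = \gamma(\varepsilon, r, k) > 0$ such that any probability measure on $\ov M$ failing the concentration $\int_{\bigcup_{i=1}^k B_r(q_i)} \mu \geq 1 - \varepsilon$ for every $k$-tuple of points must instead admit $k+1$ points $x_1, \dots, x_{k+1}$ for which the balls $B_{2\bar r}(x_i)$ are mutually disjoint while $\int_{B_{\bar r}(x_i)} \mu \geq \gamma$. This is a purely measure-theoretic selection statement, uniform in the measure and hence in $n$, which I would either cite or reprove by a finite combinatorial argument on a fixed covering of $\ov M$. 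Setting $\Omega_i^n = B_{\bar r}(x_i^n)$ and $\wtilde\Omega_i^n = B_{2\bar r}(x_i^n)$ then produces, for every $n$, mutually disjoint pairs with $d(\Omega_i^n, \p\wtilde\Omega_i^n) = \bar r =: \delta$ and $\mu_n(\Omega_i^n) \geq \gamma$, exactly the configuration required by Lemma~\ref{lem:mt}.

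Next I would apply Lemma~\ref{lem:mt} to each pair $(\Omega_i^n, \wtilde\Omega_i^n)$. Whichever of the three cases occurs, the hypothesis $\alpha_j \geq -\tfrac12$ makes every leading constant at least $4\pi$, so uniformly
$$
4\pi \log \int_{\Omega_i^n} K_{\ba} e^{v_n} \leq \frac12 \int_{\wtilde\Omega_i^n} |\n v_n|^2 + \varepsilon' \int_M |\n v_n|^2 + C .
$$
Using $\int_{\Omega_i^n} K_{\ba} e^{v_n} = \mu_n(\Omega_i^n)\int_M K_{\ba} e^{v_n} \geq \gamma \int_M K_{\ba} e^{v_n}$, summing over $i = 1, \dots, k+1$, and exploiting the disjointness of the $\wtilde\Omega_i^n$ to bound $\sum_i \int_{\wtilde\Omega_i^n} |\n v_n|^2 \leq \int_M |\n v_n|^2$, I obtain
$$
4\pi(k+1) \log \int_M K_{\ba} e^{v_n} \leq \Big( \tfrac12 + (k+1)\varepsilon' \Big) \int_M |\n v_n|^2 + C' ,
$$
where the constant $C'$ absorbs the fixed terms $-4\pi(k+1)\log\gamma$ and $(k+1)C$.

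Finally I would feed this into $J_\lambda(v_n) = \tfrac12 \int_M |\n v_n|^2 - \lambda \log\int_M K_{\ba} e^{v_n}$, which gives
$$
J_\lambda(v_n) \geq \left( \frac12 - \frac{\lambda\big(\tfrac12 + (k+1)\varepsilon'\big)}{4\pi(k+1)} \right) \int_M |\n v_n|^2 - C'' .
$$
As $\varepsilon' \to 0$ the coefficient tends to $\tfrac12 - \tfrac{\lambda}{8\pi(k+1)}$, which is strictly positive precisely because $\lambda < 4(k+1)\pi$; fixing $\varepsilon'$ small (depending only on $\lambda$ and $k$) keeps it $\geq c_0 > 0$, so $J_\lambda(v_n) \geq -C''$ uniformly in $n$, contradicting $J_\lambda(v_n) \to -\infty$. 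This proves the lemma with any level $L > C''$. I expect the covering lemma to be the only genuinely delicate step: producing $k+1$ \emph{uniformly} separated lumps of mass $\geq \gamma$, with radius $\bar r$ independent of $n$, out of a merely qualitative non-concentration hypothesis. Everything afterwards is the additivity of the local inequalities, the reduction of all three cases to the common weight $4\pi$ via $\alpha_j \geq -\tfrac12$, and the elementary observation that the constraint $\lambda < 4(k+1)\pi$ is exactly what renders the Dirichlet coefficient positive.
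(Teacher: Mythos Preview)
Your proof is correct and follows essentially the same approach as the paper: negate the concentration condition, invoke the standard covering lemma to produce $k+1$ well-separated regions each carrying mass $\ge\gamma$, apply the localized Moser--Trudinger inequalities of Lemma~\ref{lem:mt} to each region with the key observation that $\a_j\ge-\frac12$ forces every local constant to be at least $4\pi$, sum using disjointness, and conclude that $J_\l$ is bounded below. The only cosmetic differences are that you phrase the argument as a sequence contradiction rather than a direct contrapositive, and you reduce the local constants to $4\pi$ before passing to the global integral (whereas the paper first bounds $4\pi\log\int_M$ by $8\pi\log\int_M+C$ via Jensen); both routes are equivalent.
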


\begin{proof}
Assume \eqref{conc} does not hold. Then, by a standard covering lemma (see for instance \cite{mal}, Lemma 3.3), there exists $\d>0$ and $\O_1,\dots,\O_{k+1}\subset \ov M$ such that
$$d(\O_i,\O_j)\ge2\d,\;\forall i\ne j\quad\quad\quad\int_{\O_j} \frac{K_{\ba}e^v }{\int_M K_{\ba}e^v} \ge\d.$$
For any $j=1,\dots,k+1$ we apply Lemma \ref{lem:mt} with $\O=\O_j,\wtilde\O=B_\d(\O_j)$. In the \emph{boundary case}, we get
\begin{eqnarray*}
4\pi\log\int_M K_{\ba}e^v&\leq&4\pi\log\int_{\O_j} K_{\ba}e^v+4\pi\log\frac1\delta\\
&\leq&\frac{1}{2} \int_{B_\d(\O_j)} |\n v|^2+\e\int_M |\n v|^2+ C.
\end{eqnarray*}
In the \emph{regular case}, since Jensen's inequality gives $\log\int_M K_{\ba}e^v\ge-C$, then
\begin{eqnarray*}
4\pi\log\int_M K_{\ba}e^v&\leq&8\pi\log\int_M K_{\ba}e^v+C\\
&\leq&8\pi\log\int_{\O_j} K_{\ba}e^v+C\\
&\leq&\frac{1}{2} \int_{B_\d(\O_j)} |\n v|^2+\e\int_M |\n v|^2+ C.
\end{eqnarray*}
The same computation holds true in the \emph{singular case}, since $8\pi\left(1+\min\{\a_j,0\}\right)\ge4\pi$; therefore, summing on all $j$'s and taking account that $B_\d(\O_i)\cap B_\d(\O_j)=\emptyset$ for $i\ne j$, we get
$$4(k+1)\pi\log\int_M K_{\ba}e^v\leq\left(\frac{1}{2}+k\e\right)\int_M |\n v|^2+C.$$
It follows that
$$
	J_\l(v)\geq \frac12\left(1-\frac{\l}{4(k+1)\pi}(1+2k\e) \right)\int_M |\n v|^2 -C.
$$
Since $\l<4(k+1)\pi$, we can choose $\e>0$ such that $4(k+1)\pi=\l(1+2k\e)$ and we get $J_\l(v)\ge -L$ for some $L>0$, which concludes the proof.
\end{proof}

\medskip

This naturally leads us to describe the low sublevels by unit measures supported in (at most) $k$ points of $\ov M$, known as the formal barycenters of $M$ of order $k$:
\begin{equation}\label{sigk}
M_k = \left\{ \sum_{i=1}^k t_i\delta_{q_i} \, : \, \sum_{i=1}^k t_i=1,t_i\geq 0,q_i\in \ov M,\forall\,i=1,\dots,k \right\}.
\end{equation}
Indeed, one can project the measure $\frac{K_{\ba}e^v }{\int_M K_{\ba}e^v}$ on the closest element in $M_k$, see Lemma 4.9 in \cite{dj}.
\begin{pro} \label{pro:proj}
Suppose $\l < 4(k+1)\pi$. Then, there exists a projection $\Psi: J_\l^{-L}\to M_k$, for some $L\gg0$.
\end{pro}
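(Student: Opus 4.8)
The plan is to construct the projection map $\Psi$ explicitly by exploiting the concentration property established in the previous lemma, following the scheme of Lemma 4.9 in \cite{dj}. Given $v\in J_\l^{-L}$ with $\l<4(k+1)\pi$, the previous lemma guarantees that for any fixed small $\e,r>0$ there exist $k$ points $q_1,\dots,q_k$ around which the normalized conformal volume $\mu_v := \frac{K_{\ba}e^v}{\int_M K_{\ba}e^v}$ accumulates mass at least $1-\e$. The intuition is that such a $\mu_v$ is close, in a suitable weak sense, to an element of $M_k$, so we want to send it to the nearest barycenter.

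First I would fix a metric on the space of measures that is compatible with the topology we need on $M_k$, for instance a Kantorovich--Rubinstein (Wasserstein-type) distance or, more concretely, the distance induced by testing against Lipschitz functions on $\ov M$:
\[
\mathbf{d}(\mu,\nu)=\sup\left\{\left|\int_{\ov M}\phi\,d\mu-\int_{\ov M}\phi\,d\nu\right|\,:\,\|\phi\|_{\mathrm{Lip}}\leq1\right\}.
\]
Since $\ov M$ is compact, $M_k$ is a compact subset of the probability measures on $\ov M$ in this metric. The key quantitative input is to show that the concentration estimate \eqref{conc} forces $\mu_v$ to lie within distance $\eta(\e,r)$ of $M_k$, where $\eta\to0$ as $\e,r\to0$: indeed, moving the mass $1-\e$ that sits inside $\bigcup_i B_r(q_i)$ onto the points $q_i$ costs at most $r$ per unit mass, while the leftover mass $\e$ is transported a bounded distance $\mathrm{diam}(\ov M)$, giving $\mathbf{d}(\mu_v,M_k)\leq r+\e\,\mathrm{diam}(\ov M)$. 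Choosing $\e,r$ small (hence $L$ large) makes this distance smaller than the width of a tubular neighborhood of $M_k$.

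Next I would invoke the fact that $M_k$, as a compact set, admits a retraction from a small metric neighborhood $\mathcal{N}_\eta(M_k)$ onto itself; one may realize this either via a nearest-point projection when $\eta$ is below the reach of $M_k$, or more robustly by a standard Lipschitz retraction of a neighborhood of the stratified set $M_k$ (which is known to be a compact CW-complex/Euclidean neighborhood retract). Composing the assignment $v\mapsto\mu_v$ with this retraction yields a continuous map $\Psi:J_\l^{-L}\to M_k$. Continuity of $v\mapsto\mu_v$ into the space of measures with the distance $\mathbf{d}$ follows from the continuous dependence of $\int_M K_{\ba}e^v$ and of $\mu_v$ on $v$ in $H^1(M)$, using that $K_{\ba}$ is integrable against $e^v$ and the Moser--Trudinger embedding controls $e^v$ in $L^p$.

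The main obstacle I anticipate is twofold. First, the geometric retraction onto $M_k$ must be handled with care because $M_k$ is not a manifold but a singular stratified space (the strata correspond to the number of distinct points and to whether points are interior or on $\p M$); one must ensure the neighborhood retraction is continuous across strata, which is why appealing to the ENR/CW structure of $M_k$ is cleaner than a naive nearest-point projection. Second, and more delicately, the set of points $\{q_1,\dots,q_k\}$ produced by the covering lemma is not canonically attached to $v$ in a continuous way — the covering argument is existential. The resolution, as in \cite{dj}, is precisely to bypass this: one never selects the $q_i$ explicitly but instead only uses their existence to certify the uniform bound $\mathbf{d}(\mu_v,M_k)\leq\eta$, and then lets the fixed retraction of $\mathcal{N}_\eta(M_k)$ supply the continuous, canonical choice. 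Verifying that the single threshold $\eta$ can be chosen uniformly over all $v\in J_\l^{-L}$ (rather than $v$-by-$v$) is where the uniformity of $L=L(\e,r)$ in the previous lemma is essential.
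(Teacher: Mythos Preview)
Your proposal is correct and is essentially the same approach the paper takes: the paper does not give an independent proof but simply refers to Lemma~4.9 in \cite{dj}, and what you have written is precisely the standard Djadli argument (Kantorovich--Rubinstein distance, the concentration lemma forcing $\mu_v$ into a small neighborhood of $M_k$, and a retraction of that neighborhood onto the ENR $M_k$). Your discussion of the two subtleties --- the stratified structure of $M_k$ and the non-canonical choice of the $q_i$ --- is accurate and matches how the cited reference handles them.
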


\medskip

Now, to restrict our target on barycenters supported only on the boundary, we need the following result. We recall that $M$ is assumed to have at least two boundary components and we write 
$$
\p M=B_1\sqcup\dots\sqcup B_m,
$$ 
with $m>1$, where $B_i\simeq\S^1$.
\begin{lem} \label{lem:ret}
Suppose $\l < 4(k+1)\pi$. Then, there exists a map $\Psi_{\Pi}: J_\l^{-L}\to (B_1)_k$, for some $L\gg0$.
\end{lem}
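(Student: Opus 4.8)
Starting from the projection $\Psi : J_\l^{-L} \to M_k$ given by Proposition~\ref{pro:proj}, I would construct $\Psi_\Pi$ as a composition $\Psi_\Pi = R \circ \Psi$, where $R : M_k \to (B_1)_k$ is a retraction of the full barycenter space onto the barycenters of a single boundary component $B_1 \simeq \S^1$. The existence of such a retraction is precisely where the two structural hypotheses of the theorem — that $\a_j \geq -\tfrac12$ for all $j$ and that $\p M$ has at least two components — are used. The former guarantees (via Lemma~\ref{lem:mt}) that boundary points carry the \emph{smallest} weight $4\pi$, so that concentrating mass on $\p M$ is never penalized relative to interior or singular concentration; the latter provides the topological room needed to deform $M$ onto $B_1$ without collapsing $B_1$ itself.

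\textbf{Key steps.}
First I would produce a continuous deformation retraction $\Pi : \ov M \to B_1$ at the level of the underlying surface. Since $M$ has at least two boundary components, $B_1$ is a non-separating (or at least non-contractible) circle in $\ov M$, and one can retract all of $\ov M$ — including the interior, the singular points $p_j$, and the other boundary components $B_2,\dots,B_m$ — onto $B_1$; this is a purely topological statement about a compact surface with $\mathfrak b \geq 2$ boundary circles, which is why the remark about allowing corners on $B_l$ for $l>1$ is harmless (those components get retracted away). Second, I would push this forward to barycenters: any continuous map $\Pi : \ov M \to B_1$ induces a continuous map $\Pi_* : M_k \to (B_1)_k$ by
$$
\Pi_*\left(\sum_{i=1}^k t_i \delta_{q_i}\right) = \sum_{i=1}^k t_i \delta_{\Pi(q_i)},
$$
and since $\Pi$ restricts to the identity on $B_1$, the map $\Pi_*$ restricts to the identity on $(B_1)_k$, so $R := \Pi_*$ is a genuine retraction. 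Third, I would set $\Psi_\Pi = \Pi_* \circ \Psi$ and check continuity, which follows from continuity of $\Psi$ (Proposition~\ref{pro:proj}) and of $\Pi_*$ in the weak topology on measures.

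\textbf{The main obstacle.}
The delicate point is not the formula but the \emph{continuity of $\Pi_*$ across coincidences and mass-splitting}: when two points $q_i, q_j$ collide or a weight $t_i \to 0$, the representation $\sum t_i \delta_{q_i}$ as a formal barycenter is non-unique, and one must verify that $\Pi_*$ is well-defined and continuous on the quotient topology of $M_k$ rather than on labelled configurations. Equally, one must ensure the topological retraction $\Pi$ can be chosen so that $\Pi_*$ does not accidentally lower the order below what is needed downstream — i.e.\ that $(B_1)_k$ is the correct target and that the induced map on homology is the one that will later be shown to inject into $H_*(J_\l^{-L})$ via \eqref{inj}. I expect that constructing $\Pi$ concretely (say by collaring $\p M$ and using the flow of a suitable vector field vanishing on $B_1$, together with a handle-by-handle retraction of the interior) and verifying weak continuity of the pushforward will absorb most of the work, while the existence statement itself is then immediate.
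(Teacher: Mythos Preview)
Your proposal is correct and follows essentially the same route as the paper: construct a retraction $\Pi:\ov M\to B_1$ (exploiting that $M$ has at least two boundary components), push it forward to $\Pi_*:M_k\to(B_1)_k$, and set $\Psi_\Pi=\Pi_*\circ\Psi$ with $\Psi$ from Proposition~\ref{pro:proj}. The only cosmetic difference is that the paper builds $\Pi$ concretely via an embedding of $M$ in $\R^3$ and orthogonal projection onto a planar disk-with-hole, whereas you sketch an abstract collaring/flow argument; the weak-continuity of $\Pi_*$ that you flag is standard and the paper does not dwell on it.
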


\begin{proof}
We start by defining a global retraction $\Pi:M\to B_1$. To this end, consider the space $\R^3\ni(x,y,z)$ and the projection $P:\R^3\to\{z=0\}$. We point out that any two compact surfaces with the same genus and same number of boundary components are homeomorphic. Therefore, we can assume without loss of generality that $M$ is embedded in $\R^3$ such that in the holes $B_1$ and $B_2$ passes the same line parallel to the $z$-axis such that $P(M)$ is a disk with at least one hole $H$ with $\p H=P(B_1)$, see Figure~\ref{fig:retraction}. Therefore, there exists a retraction $R:P(M)\to P(B_1)$ which induces a retraction $\Pi:M\to B_1$.

\medskip

\begin{figure}[h]
\centering
\includegraphics[width=0.7\linewidth]{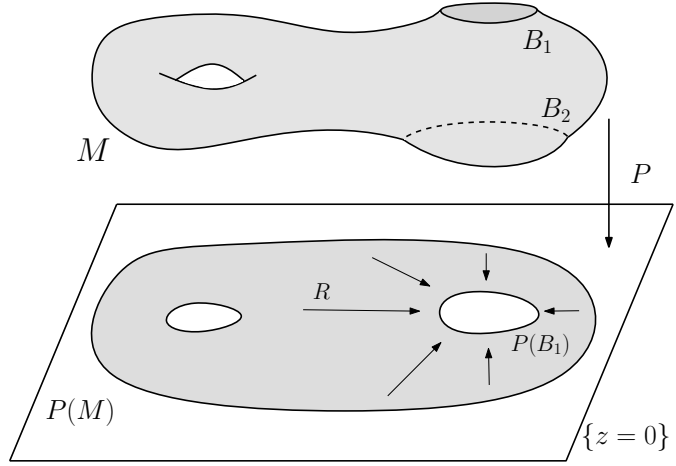}
\caption{The construction of the retraction $\Pi:M\to B_1$.}
\label{fig:retraction}
\end{figure}

\medskip

Now, by Proposition \ref{pro:proj} there exists a projection $\Psi: J_\l^{-L}\to M_k$, for some \mbox{$L\gg0$.} The desired map $\Psi_{\Pi}: J_\l^{-L}\to (B_1)_k$ is then defined through the composition
$$
J_\l^{-L} \stackrel{\Psi}{\longrightarrow} M_k \stackrel{\Pi_*}{\longrightarrow} (B_1)_k,
$$
where $\Pi_*$ denotes here the push-forward of measures induced by the above retraction.
\end{proof}

\medskip

To gain some topological properties of the low sublevels we construct now a reverse map $\Phi: (B_1)_k\to J_\l^{-L}$. To this end, we consider a family of regular bubbles centered at the boundary component $B_1$ and we set, for $\L>0$, $\Phi: (B_1)_k\to \ov{H}^1(M)$ as
\be \label{phi}
\Phi:\s=\sum_{i=1}^k t_i\d_{q_i}\quad\mapsto\quad\varphi_{\L,\s} - \ov{\varphi}_{\L,\s},
\ee
where
$$
\varphi_{\L,\s}(y) = \log \sum_{i=1}^k t_i \left( \frac{\L}{1+\L^2 d(y,q_i)^2} \right)^2,$$
and $\ov{\varphi}_{\L,\s}$ is the average of $\varphi_{\L,\s}$. Then, the following estimates hold true.
\begin{lem} \label{lem:test}
Let $\varphi_{\L,\s}$, $\s\in (B_1)_k$ be the functions defined above. Then, for \mbox{$\L\to+\infty$} we have
\begin{eqnarray*}
\frac12 \int_M |\n \varphi_{\L,\s}|^2  &\leq& 8k\pi(1+o(1))\log\L, \label{dir}\\
\log \int_M K_{\ba}e^{\varphi_{\L,\s}-\ov{\varphi}_{\L,\s}}  &=& 2(1+o(1))\log\L. \nonumber
\end{eqnarray*}
Moreover,
\be \label{conv}
\frac{K_{\ba}e^{\varphi_{\L,\s}}}{\int_M K_{\ba}e^{\varphi_{\L,\s}}} \rightharpoonup \s\in (B_1)_k,
\ee
in the sense of measures.
\end{lem}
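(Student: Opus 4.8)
The plan is to reduce all three statements to the model computation for a single standard bubble on a Euclidean half-plane, exploiting that $\p M$ is geodesic for $g_0$. Near a point $q\in B_1$ I would use Fermi coordinates $(s,t)$, $t\geq0$, in which $B_1=\{t=0\}$, $M=\{t\geq0\}$ locally and, because the boundary is geodesic, $g_0=dt^2+(1+O(t^2))\,ds^2$; thus to leading order $M$ is the half-plane $\{t\geq0\}$ and $d(y,q)^2=s^2+t^2+O((s^2+t^2)^{3/2})$. Hence each bubble $U_q:=\log\big(\frac{\L}{1+\L^2 d(\cdot,q)^2}\big)^2$ is, up to lower-order corrections, the restriction to a half-plane of a function radially symmetric about $q$, and by reflection it carries exactly half the Dirichlet energy and half the volume of the corresponding interior bubble. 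This halving is what converts the interior coefficient $16\pi$ into the boundary coefficient $8\pi$.

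For the first estimate I would fix $d_0>0$ of the order of the minimal mutual distance of the $q_i$ and split $M$ into the half-balls $B_{d_0}(q_i)\cap M$ and the remaining bulk. On $B_{d_0}(q_i)\cap M$ the $i$-th summand dominates for $\L$ large, so $\varphi_{\L,\s}=\log t_i+U_{q_i}+w_i$ with $\int_{B_{d_0}(q_i)}|\n w_i|^2=O(1)$; since $\log t_i$ is constant, the half-plane computation and Cauchy--Schwarz on the cross term give $\frac12\int_{B_{d_0}(q_i)\cap M}|\n\varphi_{\L,\s}|^2=8\pi\log\L+o(\log\L)$. On the bulk every bubble is of size $\L^{-2}$ and $\varphi_{\L,\s}=-2\log\L+\log\sum_i t_i\,d(\cdot,q_i)^{-4}+o(1)$, whose gradient is $O(1)$ uniformly in $\L$, so the bulk contributes only $O(1)$. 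Summing over $i$ gives $\frac12\int_M|\n\varphi_{\L,\s}|^2\leq 8k\pi(1+o(1))\log\L$; I keep the inequality because it is stable under the degenerations $t_i\to0$ and $q_i\to q_j$, where fewer effective bubbles can only decrease the energy.

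For the second estimate I would first show that $\int_M K_{\ba}e^{\varphi_{\L,\s}}$ is bounded between positive constants: rescaling $z=\L\,d(y,q_i)$ and using that $K_{\ba}$ is continuous and positive on a neighbourhood of $B_1$ (the conical points lie in the interior, away from $B_1$) yields $\int_{B_{d_0}(q_i)\cap M}K_{\ba}e^{\varphi_{\L,\s}}\to \frac\pi2 t_i K_{\ba}(q_i)$, while the bulk gives $o(1)$; hence $\log\int_M K_{\ba}e^{\varphi_{\L,\s}}=O(1)$. Next, integrating $\varphi_{\L,\s}$ and noting that its bulk value $-2\log\L$ dominates (the near-center contributions are $O(1)$ since that region has area $O(\L^{-2})$) gives $\ov{\varphi}_{\L,\s}=-2\log\L+O(1)$. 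Combining, $\log\int_M K_{\ba}e^{\varphi_{\L,\s}-\ov{\varphi}_{\L,\s}}=\log\int_M K_{\ba}e^{\varphi_{\L,\s}}-\ov{\varphi}_{\L,\s}=2\log\L+O(1)=2(1+o(1))\log\L$.

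The concentration \eqref{conv} comes from the same rescaling: testing against $\psi\in C(\ov M)$, the mass of the normalized measure outside any fixed neighbourhood of $\{q_1,\dots,q_k\}$ tends to $0$, while near each $q_i$ it converges to a point mass, so the measure converges weakly to an element of $(B_1)_k$ supported on the $q_i$ (with weights proportional to $t_i K_{\ba}(q_i)$, which suffices since the resulting self-map of $(B_1)_k$ is homotopic to the identity). The main difficulty is not any individual computation but securing all error terms \emph{uniformly} over $\s\in(B_1)_k$, especially as some $t_i\to0$ or the $q_i$ collide, since only a uniform bound $\frac12\int_M|\n\varphi_{\L,\s}|^2\leq 8k\pi\log\L+C$ ensures that $\Phi$ maps the whole of $(B_1)_k$ into a single sublevel $J_\l^{-L}$; this is exactly what the stability of the ``$\leq$'' direction and a configuration-adapted choice of $d_0$ are designed to provide.
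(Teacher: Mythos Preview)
Your approach is essentially the same as the paper's, which does not give a detailed proof but simply refers to Proposition~4.2 in \cite{mal2} for the closed-surface case and remarks that the only modification needed is the halving of the Dirichlet energy contribution, since the bubbles are centered on $B_1$ and hence away from the conical points; you have spelled out precisely these computations via Fermi coordinates and the near/far decomposition. Your observation that the limit in \eqref{conv} actually has weights proportional to $t_iK_{\ba}(q_i)$ rather than $t_i$ is in fact more careful than the paper's statement, and, as you note, harmless for the subsequent homotopy argument.
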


\medskip

The latter estimates are by now standard and we refer for example to Proposition~4.2 in \cite{mal2} where the regular closed surface case is considered. The presence of the boundary can be handled with obvious modifications. Indeed, the only difference is that the main contribution of the Dirichlet energy \eqref{dir} comes from half-balls around the centers of the bubbles and it is thus divided by a factor $2$. Observe that we can neglect the effect of the singularities since we are considering bubbles centered on the boundary component $B_1$ which does not have conical points.

By plugging the above estimates into the functional $J_\l$ it is then easy to conclude the following.
\begin{pro} \label{pro:test}
Suppose $\l>4k\pi$ and let $\Phi$ be given as in \eqref{phi}. Then, for any $L>0$ there exists $\L\gg0$ such that $\Phi: (B_1)_k\to J_\l^{-L}$.
\end{pro}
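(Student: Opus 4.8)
The plan is to substitute the two estimates of Lemma~\ref{lem:test} directly into the functional $J_\l$ and to exploit the strict inequality $\l>4k\pi$ in order to show that $J_\l(\Phi(\s))\to-\infty$ uniformly in $\s$ as $\L\to+\infty$. First I would observe that, by its very definition, $\Phi(\s)=\varphi_{\L,\s}-\ov{\varphi}_{\L,\s}$ has zero average, so that $\Phi(\s)\in\ov H^1(M)$ and the map takes values in the correct space.

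Since subtracting the constant $\ov{\varphi}_{\L,\s}$ leaves the gradient unchanged, the Dirichlet term of $J_\l$ is controlled by the first estimate of Lemma~\ref{lem:test}:
$$
\frac12\int_M|\n\Phi(\s)|^2=\frac12\int_M|\n\varphi_{\L,\s}|^2\leq 8k\pi(1+o(1))\log\L,
$$
while the volume term is given exactly by the second estimate,
$$
\log\int_M K_{\ba}e^{\Phi(\s)}=2(1+o(1))\log\L.
$$
Recalling that $J_\l(v)=\frac12\int_M|\n v|^2-\l\log\int_M K_{\ba}e^v$, combining the two displays yields
$$
J_\l(\Phi(\s))\leq\bigl(8k\pi-2\l+o(1)\bigr)\log\L\qquad\text{as }\L\to+\infty.
$$
Because $\l>4k\pi$ means $8k\pi-2\l<0$, the leading coefficient of $\log\L$ is strictly negative, hence the right-hand side diverges to $-\infty$. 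Thus, given any $L>0$, I can fix $\L\gg0$ large enough that $J_\l(\Phi(\s))\leq-L$, which is precisely the assertion $\Phi:(B_1)_k\to J_\l^{-L}$.

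The only delicate point — and the main (though mild) obstacle — is the uniformity of the $o(1)$ terms with respect to the barycentric configuration $\s\in(B_1)_k$. I expect this to follow from the compactness of the barycenter space $(B_1)_k$ together with the fact that the estimates of Lemma~\ref{lem:test} hold uniformly over such configurations; granting this, the threshold $\L$ can be chosen independently of $\s$, so the entire image $\Phi\bigl((B_1)_k\bigr)$ lands in $J_\l^{-L}$, concluding the proof.
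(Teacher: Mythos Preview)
Your argument is correct and follows exactly the paper's approach: the paper's proof simply records the bound $J_\l(\Phi(\s))\leq (8k\pi-2\l+o(1))\log\L$ obtained by plugging the two estimates of Lemma~\ref{lem:test} into $J_\l$, and concludes by the strict inequality $\l>4k\pi$. Your version merely spells out the substitution more carefully and adds the (correct) remark about uniformity in $\s$, which the paper leaves implicit.
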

\begin{proof}
Indeed, we have
$$
	J_\l(\Phi(\s))\leq (8k\pi-2\l+o(1))\log\L
$$
and since $\l>4k\pi$ the thesis follows by choosing suitably $\L\gg0$.
\end{proof}

\medskip

We can now prove the main result.

\medskip

\begin{proof}[Proof of Theorem \ref{thm2}.]
Take $\l\in(4k\pi,4(k+1)\pi)\setminus\G_{\ba}$. By Lemma \ref{lem:ret} there exists a map $\Psi_{\Pi}: J_\l^{-L}\to (B_1)_k$, for some $L\gg0$. On the other hand, by Proposition~\ref{pro:test} we have a map $\Phi: (B_1)_k\to J_\l^{-L}$. Such maps are natural in the sense that the composition
$$\begin{array}{ccccc}
(B_1)_k&\stackrel{\Phi}{\longrightarrow}&J_\l^{-L}&\stackrel{\Psi_{\Pi}}{\longrightarrow}& (B_1)_k \\
\s&\mapsto&\left(\varphi_{\L,\s}-\ov{\varphi}_{\L,\s}\right)&\mapsto&\frac{K_{\ba}e^{\varphi_{\L,\s}}}{\int_M K_{\ba}e^{\varphi_{\L,\s}}} \simeq \s
\end{array}$$
is homotopic to the identity on $(B_1)_k$. Here we recall \eqref{conv} holds true. We refer to Proposition 4.4 in \cite{mal2} for more details on this point. Passing to the induced maps $\Phi^*,\Psi_{\Pi}^*$ between homological groups $H_*$ we derive $\Psi_{\Pi}^*\circ\Phi^*=\mbox{Id}_{(B_1)_k}^*$. In particular,
\begin{equation} \label{inj}
H_*((B_1)_k) \hookrightarrow H_*\left(J_\l^{-L}\right)
\end{equation}
injectively. Since $(B_1)_k\simeq\left(\S^1\right)_k\simeq\S^{2k-1}$, see for example Proposition 3.2 in \cite{bm}, we deduce that $J_\l^{-L}$ is not contractible. But $J_\l^{b}$ is contractible for $b\gg0$ by Proposition~\ref{pro:contr}. The existence of a solution to \eqref{liouv3} follows by Lemma~\ref{lem:def}.
\end{proof}

\

\end{document}